\newtheorem*{proposition}{The main result}
\theoremstyle{definition}
\newtheorem*{remark}{Remark}
\theoremstyle{remark}
\numberwithin{equation}{section}
\newcounter{smallromans}
\newcounter{smallalphs}
\author{Tomasz Kania}
\address{Department of Mathematics and Statistics, Fylde College,
  Lancaster University, Lancaster LA1 4YF, United Kingdom}
\email{t.kania@lancaster.ac.uk}
\title[A reflexive space whose algebra of operators is not a Grothendieck space]{A reflexive Banach space whose algebra of operators is not a Grothendieck space}
\begin{document}
\maketitle
\begin{abstract}By a result of Johnson, the Banach space $F=\big(\bigoplus_{n=1}^\infty \ell_1^n \big)_{\ell_\infty}$ contains a complemented copy of $\ell_1$. We identify $F$ with a complemented subspace of the space of (bounded, linear) operators on the reflexive space $\big(\bigoplus_{n=1}^\infty \ell_1^n \big)_{\ell_p}$ ($p\in (1,\infty))$, thus solving negatively the problem posed in the monograph of Diestel and Uhl which asks whether the space of operators on a reflexive Banach space is Grothendieck.\end{abstract}
\section{Introduction}

A Banach space $E$ is \emph{Grothendieck} if weak* convergent sequences in $E^*$ converge weakly. Certainly, every reflexive Banach space is Grothendieck. Notable examples of non-reflexive Grothendieck spaces are $C(K)$-spaces for extermally disconnected compact spaces $K$ (\cite{grothendieck}) and the Hardy space $H^\infty$ of bounded holomorphic functions on the unit disc (\cite{bourgain}).
Diestel and Uhl wrote in their famous monograph \cite[p.~180]{diesteluhl}:
\begin{quotation}\emph{Finally, there is some evidence (Akemann [1967], [1968]) that the space $\mathscr{L}(H;H)$ of bounded linear operators on a Hilbert space is a Grothendieck space and that more generally the space $\mathscr{L}(X;X)$ is a Grothendieck space for any reflexive Banach space $X$.}\end{quotation}
The question of whether the space of (bounded, linear) operators on a reflexive Banach space is Grothendieck was raised also by Soyba\c{s} (\cite{soybas}). Pfitzner proved in \cite{pfitzner} that C*-algebras have the so-called \emph{Pe\l czy\'{n}ski's property} (V) which for dual Banach spaces is equivalent to being a Grothendieck space (cf.~\cite[Exercise 12, p.~116]{diestel}). In particular, von Neumann algebras are Grothendieck spaces which confirms that the space of operators on a Hilbert space is Grothendieck. It is known that duals of spaces with property (V) are weakly sequentially complete. We shall present an example of a reflexive Banach space $E$ such that $\mathscr{B}(E)$ fails to be Grothendieck, giving thus a negative answer to the above-mentioned problem. To do this, we require a result of Johnson which asserts that the Banach space $F=\big(\bigoplus_{n=1}^\infty \ell_1^n \big)_{\ell_\infty}$ contains a complemented copy of $\ell_1$ (cf.~the Remark after Theorem 1 in \cite{johnson}), so it is not a Grothendieck space.

By an \emph{operator} we understand a bounded, linear operator acting between Banach spaces. The space $\mathscr{B}(E_1, E_2)$ of operators acting between spaces $E_1$ and $E_2$ is a Banach space  when endowed with the operator norm. We write $\mathscr{B}(E)$ for $\mathscr{B}(E,E)$. Let $p\in [1,\infty]$. We denote by $(\bigoplus_{n=1}^\infty E_n)_{\ell_p}$ the $\ell_p$-sum of a sequence $(E_n)_{n=1}^\infty$ of Banach spaces. We identify elements of $\mathscr{B}((\bigoplus_{n=1}^\infty E_n)_{\ell_p})$ with \emph{matrices} $(T_{ij})_{i,j\in \mathbb{N}}$, where $T_{ij}\in \mathscr{B}(E_j, E_i)$ ($i,j\in \mathbb{N}$). Let $(e_n)_{n=1}^\infty$ be the canonical basis of $\ell_1$. For each $n\in \mathbb{N}$ we define $\ell_1^n = \mbox{span}\{e_1, \ldots, e_n\}$.

\section{The result}
\begin{proposition}Let $p\in (1,\infty)$ and consider the reflexive Banach space $E = \big(\bigoplus_{n=1}^\infty \ell_1^n \big)_{\ell_p}$. Then $\mathscr{B}(E)$ is not a Grothendieck space.\end{proposition}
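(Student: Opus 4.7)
The plan is to realize $F = \big(\bigoplus_{n=1}^\infty \ell_1^n\big)_{\ell_\infty}$ as a complemented subspace of $\mathscr{B}(E)$ and then invoke Johnson's result together with the fact that the Grothendieck property passes to complemented subspaces. Since $F$ contains a complemented copy of $\ell_1$, it is not Grothendieck; once $F$ is complemented in $\mathscr{B}(E)$, neither is $\mathscr{B}(E)$.

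To build the embedding $J\colon F\to \mathscr{B}(E)$, I would use block-diagonal rank-one operators. Fix, for each $n$, a norm-one functional $f_n\in(\ell_1^n)^*$ together with a norm-one vector $u_n\in\ell_1^n$ with $f_n(u_n)=1$ (e.g., $f_n$ the last coordinate functional and $u_n$ the last basis vector of $\ell_1^n$). For $x=(x_n)_n\in F$ define $J(x)\in\mathscr{B}(E)$ by acting on the $n$-th block $\ell_1^n$ as $y\mapsto f_n(y)\,x_n$ and sending off-diagonal blocks to zero. A direct computation, using that the $\ell_p$-norm of $(f_n(y_n)\|x_n\|)_n$ is dominated by $\sup_n\|x_n\|$ times the $\ell_p$-norm of $(\|y_n\|)_n$, shows $\|J(x)\|\le \|x\|_F$; the reverse inequality is seen by testing $J(x)$ on a single block. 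Thus $J$ is an isometric embedding.

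For the complementation, I would write any $S\in\mathscr{B}(E)$ as a matrix $(S_{ij})$ with $S_{ij}\in \mathscr{B}(\ell_1^j,\ell_1^i)$ and define
\[
Q(S) := J\bigl((S_{nn}u_n)_n\bigr).
\]
Since $\|S_{nn}\|\le\|S\|$ (the diagonal blocks are compressions of $S$ by the canonical norm-one inclusions and projections), one has $\|S_{nn}u_n\|\le \|S\|$ for every $n$, so $(S_{nn}u_n)_n$ really lies in $F$ with norm at most $\|S\|$. Because $J$ is an isometry, $Q$ is a norm-one operator on $\mathscr{B}(E)$; and $Q\circ J = J$ because $S_{nn}u_n = x_n$ when $S=J(x)$. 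Therefore $Q$ is a norm-one projection of $\mathscr{B}(E)$ onto a subspace isomorphic to $F$.

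Once this is in place, the conclusion is quick: $F$ contains a complemented copy of $\ell_1$ by Johnson's theorem, and $\ell_1$ is not Grothendieck (any weak\textsuperscript{*}-null non-weakly-null sequence in $\ell_\infty=\ell_1^*$, e.g.\ the unit vector basis, witnesses this). Hence $F$ is not Grothendieck, and since the Grothendieck property is inherited by complemented subspaces (via the dual of the projection, which is a weak\textsuperscript{*}-to-weak\textsuperscript{*} continuous isometric embedding $F^*\hookrightarrow \mathscr{B}(E)^*$), $\mathscr{B}(E)$ fails the Grothendieck property. I do not expect a serious obstacle: the only point requiring care is the verification that $J$ is isometric and that $Q$ is bounded, both of which reduce to the interplay between the $\ell_p$-norm on $E$ and the supremum norm on $F$, exploiting the rank-one nature of the blocks $x_n\otimes f_n$.
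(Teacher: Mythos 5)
Your construction is correct, and while the goal is the same as in the paper (realizing $F=\big(\bigoplus_{n=1}^\infty \ell_1^n\big)_{\ell_\infty}$ as a complemented subspace of $\mathscr{B}(E)$ and quoting Johnson), the way you obtain the complementation is genuinely more direct. The embedding itself is essentially the paper's: there one sends $x\in\ell_1^n$ to the rank-one operator $x\otimes e_1^*$ and then applies the diagonal embedding, which is exactly your $J$ with $f_n=e_1^*$ and $u_n=e_1$. The difference lies in the projection. The paper factors it through $D=\big(\bigoplus_{n=1}^\infty \mathscr{B}(\ell_1^n)\big)_{\ell_\infty}$ via the diagonal compression $\Xi(T)=(T_{nn})_n$ and spends most of the proof bounding $\Xi$ by an averaging argument in the spirit of Tong (sign-flipping rows and columns and passing to the perturbations $S^{(n)}$). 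Your $Q(S)=J\big((S_{nn}u_n)_n\big)$ needs only the observation that each diagonal block is a compression $S_{nn}=P_nSI_n$ by the norm-one coordinate inclusion and projection, so $\sup_n\|S_{nn}u_n\|\le\|S\|$; since $J$ is isometric, $Q$ is a norm-one idempotent with range $J(F)$, and no averaging trick is needed. In effect you isolate exactly what the complementation requires, and the blockwise estimate $\|S_{nn}\|\le\|S\|$ short-circuits the technical core of the paper's proof; this is a legitimate simplification, not a gap.

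One slip to fix: your parenthetical witness that $\ell_1$ is not Grothendieck is wrong. The unit vectors $(e_n)$ in $\ell_\infty=\ell_1^*$ are weak*-null, but they are also weakly null in $\ell_\infty$ (they are weakly null already in $c_0$, and the inclusion $c_0\subset\ell_\infty$ preserves weak nullity), so they witness nothing. A correct witness is, for instance, the sequence $g_n=\sum_{k\geqslant n}e_k$: it is weak*-null on $\ell_1$, but a Banach limit (or any ultrafilter-limit functional) takes the value $1$ on every $g_n$, so it is not weakly null; alternatively, recall that a separable Grothendieck space is reflexive, so $\ell_1$ cannot be Grothendieck. Your closing argument that the Grothendieck property passes to complemented subspaces via the adjoint of the projection is fine, so with the corrected witness the proof is complete.
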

\begin{proof}Recall that $F=\big(\bigoplus_{n=1}^\infty \ell_1^n \big)_{\ell_\infty}$ contains a complemented copy of $\ell_1$. To complete the proof it is enough to embed $F$ as a complemented subspace of $\mathscr{B}(E)$.

One may identify $\ell_1^n$ with a 1-complemented subspace of $\mathscr{B}(\ell_1^n)$ via the mapping 
\[e_k\mapsto e_k\otimes e_1^*\;\; (k\leqslant n,\, n\in \mathbb{N}),\] 
where $e_1^*$ stands for the coordinate functional associated with $e_1$. Consequently, the space $D=(\bigoplus_{n=1}^\infty \mathscr{B}(\ell_1^n))_{\ell_\infty}$ contains a complemented subspace isomorphic to $F$. Let $\Delta\colon D\to \mathscr{B}(E)$ be the \emph{diagonal embedding}, that is, $\Delta( (T_n)_{n=1}^\infty ) = \mbox{diag}(T_1, T_2, \ldots)$ ($(T_n)_{n=1}^\infty \in D$); this map is well-defined since the decomposition of $E$ into the subspaces $\ell_1^1, \ell_1^2,\ldots $ is unconditional.

It is enough to notice that $\Delta$ has a left-inverse $\Xi\colon \mathscr{B}(E)\to D$ given by 
\[\Xi (T_{ij})_{i,j\in \mathbb{N}} = (T_{ii})_{i=1}^\infty\;\;((T_{ij})_{i,j\in\mathbb{N}}\in \mathscr{B}(E)),\] 
which is bounded. To this end, we shall perform a construction inspired by a trick of Tong (\emph{cf.}~\cite[Theorem 2.3]{tong} and its proof). With each operator $T=(T_{ij})_{i,j\in\mathbb{N}}\in \mathscr{B}(E)$ we shall associate a sequence $\left(S^{(n)}\right)_{n=1}^\infty$ of finite-rank perturbations of $T$ such that for each $n\in\mathbb{N}$ we have $\|S^{(n)}\|\leqslant \|T\|$ and the matrix of $S^{(n)}$ agrees with the matrix of the diagonal operator $\mbox{diag}(-T_{11}, \ldots, -T_{nn}, 0, 0, \ldots)$ at entries $(i,j)$ with $i\leqslant n$ or $j\leqslant n$. This will immediately yield that
\[\|\Xi (T)\| =  \sup_{n\in \mathbb{N}}\|T_{nn}\| = \sup_{n\in \mathbb{N}}\|-S^{(n)}_{nn}\| \leqslant \sup_{n\in \mathbb{N}}\|S^{(n)}\| \leqslant \|T\|.\]
Define operators $T_{\mathsf k}, T_{\mathsf r}$ which have the same columns and rows as $T$ respectively, except the first ones, where we instead set $(T_{\mathsf k})_{i1} = - T_{i1}$ and $(T_{\mathsf r})_{1j} = - T_{1j}$ for $i,j\in \mathbb{N}$ (these are indeed elements of $\mathscr{B}(E)$ as rank-1 perturbations of $T$). Certainly, $\|T\| = \|T_{\mathsf k}\| =  \|T_{\mathsf r}\|$ and the norm of $S= ( T_{\mathsf k}+T_{\mathsf r} )/2$ does not exceed the norm of $T$. Arguing similarly, we observe that $\| ( S^{(n)}_{\mathsf k}+S^{(n)}_{\mathsf r} )/2\|\leqslant \|T\|$, where $S^{(1)}=S$ and $S^{(n+1)} =( S^{(n)}_{\mathsf k}+S^{(n)}_{\mathsf r} )/2$ ($n\in\mathbb{N}$). Consequently, $\left(S^{(n)}\right)_{n=1}^\infty$ is the desired sequence.
 \end{proof}

\begin{remark}The space $\mathscr{B}(E)$ shares with the space of operators on a Hilbert space a number of common properties. For instance, since $E$ has a Schauder basis, $\mathscr{B}(E)$ can be identified with the bidual of $\mathscr{K}(E)$, the space of compact operators on $E$. Nonetheless, $E$ is plainly not superreflexive and $\mathscr{B}(E)$ fails to have weakly sequentially dual for the obvious reason that $\ell_\infty$ embeds into $\mathscr{B}(E)^*$. We conjecture that the space of operators on a superreflexive space is Grothendieck (or at least it has weakly sequentially complete dual).\end{remark}

\end{document}